\newtheorem{theorem}{Theorem}[section]
\newtheorem{lemma}{Lemma}[section]
\theoremstyle{definition}
\numberwithin{equation}{section}
\title{Asymptotic freeness of sample covariance matrices via embedding}
\author{
\hspace{.05\textwidth}
 \parbox[t]{0.4\textwidth}{{\sc Monika Bhattacharjee}
 \thanks{Research  supported by the seed grant from IIT Bombay.}
 \\ {\small Department of Mathematics\\ Indian  Institute of Technology, Bombay\\  monaiidexp.gamma@gmail.com}}
\hspace{0.5 cm}\parbox[t]{0.4\textwidth}{{\sc Arup Bose}
 \thanks{Research  supported by J.C.~Bose National Fellowship, Dept.~of Science and Technology, Govt.~of India.}
 \\ {\small Statistics and Mathematics Unit, Kolkata\\ Indian Statistical Institute\\  bosearu@gmail.com\\}}}
\date{\today}
\begin{document}
\maketitle

\begin{abstract}
We present an alternative proof of asymptotic freeness of independent sample covariance matrices, when the dimension and  the sample size grow 
at the same rate, by embedding these matrices into Wigner matrices of a larger order and using asymptotic freeness of independent Wigner and deterministic matrices.
\end{abstract}

\noindent \textbf{Key words}: Asymptotic freeness; Kreweras complement; Mar\v{c}enko-Pastur variable; \\ Non-crossing partition; Sample covariance matrix; Semi-circular variable; Wigner matrix. \\
\vskip 0pt
\noindent{\bf AMS 2010 Subject Classifications.} Primary 60B20; Secondary 46L54
\bigskip

\section{Introduction}                                      
Free probability was introduced by \cite{V1991} and soon thereafter, connections were forged between asymptotic freeness of random matrices and free probability by several authors, see for example  \cite{D1993, NP1993,  S1997, V1998} and \cite{HP1999}. 
Asymptotic freeness of many important random matrices have been established and these results have been applied to other domains including high-dimensional statistics (see \cite{BM2018}) and wireless communications (see \cite{CD2011}). 

The most remarkable results in this area are asymptotic freeness of (i) independent Wigner matrices and deterministic matrices  (see \cite{NS2006} and \citep{ZAG2010}),  and (ii) independent sample covariance matrices (see \cite{CC2004}). There is a common thread in the proof of these results. These matrices are viewed as elements of the non-commutative space of random matrices with the state as Etr and the asymptotic freeness is shown by proving that all tracial moments converge and the limits obey the freeness criterion. The arguments and techniques used in the proofs are quite alike and inter-connected. 

Recall that a sample covariance matrix is of the form $XX^*$ where $X$ is a matrix of independent random variables. On the other hand the entries of a Wigner matrix are also independent random variables with the additional restriction of symmetry imposed on the matrix. 
It is thus natural to expect that result (i)  can be leveraged to prove result (ii).
We show that this is indeed possible by connecting the sample covariance matrices with Wigner and deterministic matrices of higher orders.
This is usually known as the embedding technique.
While embedding techniques have been often used in free probability and random matrix theory (see \cite{speicher2009free, belinschi2017analytic}),   
to the best of our knowledge, the presented alternative proof 
does not seem to be directly available in the literature. 

\section{Necessary concepts and results from free probability}
We give a brief introduction to free probability, borrowing heavily 
from \cite{NS2006} and \cite{ZAG2010}.  


\noindent \textbf{Non-crossing partitions and Kreweras complement}. For integer $k\geq 1$, $NC(k)$ is the lattice of non-crossing partitions of $\{1, \ldots , k\}$,  and $\mu (\cdot,  \cdot)$ is the M\"{o}bius function on $NC(k), k \geq 1$. 
%
Consider numbers $\bar{1},\bar{2},\ldots, \bar{n}$ and interlace them with $1,2,\ldots,n$ as:
$1, \bar{1}, 2, \bar{2},\ldots, n, \bar{n}.$
For any $\pi \in NC(n)$, 
its \textit{Kreweras complement} $K(\pi) \in NC(\bar{1},\bar{2},\ldots,\bar{n}) \cong NC(n)$ is the largest element among those $\sigma \in NC(\bar{1},\bar{2},\ldots, \bar{n})$ such that  
$\pi \cup \sigma \in NC(1,\bar{1},2,\bar{2},\ldots, n,\bar{n}).$ 

\noindent \textbf{Non-commutative probability space}.
A pair $(\mathcal{A},\varphi)$ is said to be a \textit{non-commutative $*$-probability space} (NCP), 
if $\mathcal{A}$ is a unital $*$-algebra over the set of complex numbers $\mathbb{C}$, with unity/identity $1_{\mathcal{A}}$, and the \textit{state} $\varphi:\mathcal{A} \rightarrow \mathbb{C}$, is a linear functional such that $\varphi(1_{\mathcal{A}})=1$. 
The elements in $\mathcal{A}$ are called non-commutative \textit{random variables}.
If $a=a^{*}$, then $a$ is called \textit{self-adjoint}. The state $\varphi$ is \textit{tracial} if $\varphi(ab) = \varphi(ba),\ \ \forall a,b \in \mathcal{A}$. All states discussed in this article are tracial.
For any positive integer $n$, let $\mathcal{M}_{n}(\mathbb{C})$ be the set of $n\times n$  random matrices with complex entries with the usual operations of addition, multiplication, and $*$ (conjugate transpose). 
Let $n^{-1}{\rm{ETr}}: \mathcal{M}_{n}(\mathbb{C}) \rightarrow \mathbb{C}$ be the expected normalized trace,
$n^{-1}{\rm{ETr}}(a) = \frac{1}{n}\sum_{i=1}^{n}{\rm E}(\alpha_{ii})\, \ \ \forall a = ((\alpha_{ij}))_{i,j=1}^{n} \in \mathcal{M}_{n}(\mathbb{C}).$ Then $(\mathcal{M}_{n}(\mathbb{C}),n^{-1}{\rm{ETr}})$ is an NCP and the state is tracial. 
\noindent \textbf{Convergence of variables}.
Let $(\mathcal{A}_{n}, \varphi_n)$ be a sequence of NCP. Variables $\{a_i^{(n)},  1\leq i \leq t\}$  from $\mathcal{A}_n$ are said to converge if for every polynomial $\Pi$ in these variables,
$\varphi_{n}\big(\Pi(a_i^{(n)},a_i^{*(n)}:1 \leq i \leq t)\big)$ converges. In that case we can define a polynomial $*$-algebra $\mathcal{A}$ generated by some indeterminate variables $\{a_i,a_i^{*}:1 \leq i \leq t\}$ and define a state $\varphi$ on $\mathcal{A}$ by the above limits.


\noindent \textbf{Free cumulants}.
 Let $(\mathcal{A},\varphi)$ be an NCP. Let $\mathbb{N}$ be the set of all natural numbers. Then $\varphi$ is extended to 
$\mathcal{A}^{k}$ by  
$$\varphi_{k} (a_{1},a_{2},\ldots, a_{k}) := \varphi(a_{1}a_{2}\ldots a_{k}).$$ 
These functionals are used to define \textit{multiplicative functionals} on $NC(k), k \geq 1$ as follows.  
Let $\pi = \{V_{1}, V_{2}, \ldots, V_{r} \} \in NC(k)$, where $\{V_i\}$ are the block of $\pi$. Then
$$\varphi_{\pi}[a_{1},a_{2},\ldots, a_{k}] := \varphi(V_1)[a_{1},a_{2},\ldots,a_{k}]\cdots \varphi(V_r)[a_{1},a_{2},\ldots,a_{k}],$$
where 
$$\varphi(V)[a_{1},a_{2},\ldots, a_{k}] :=\varphi_{s}(a_{i_1},a_{i_2},\ldots,a_{i_s})=\varphi(a_{i_1}a_{i_2}\ldots a_{i_s})$$ if $V = \{i_1,  \ldots ,  i_s, i_1 < i_2 < \ldots < i_s\}$. 

The \textit{free cumulant} functionals on $\mathcal{A}^k$ are defined as 
$$\kappa_{k}(a_{1},a_{2},\ldots, a_{k}) = \sum_{\sigma \in NC(k)} \varphi_{\sigma}[a_{1},a_{2},...,a_{k}]\mu(\sigma, 1_{k}),$$
where  $1_k$ is the single block partition of $\{1,2,\ldots,k\}$.  If $a_1=\cdots =a_k=a$ then we write 
$\kappa_{k}(a_{1},a_{2},\ldots, a_{k})$ as $\kappa_k(a)$. 
The multiplicative extensions of these free cumulants are given by
$$\kappa_{\pi}[a_{1},a_{2},...,a_{k}] := \sum_{\stackrel{\sigma \in NC(k)}{\sigma \leq \pi}} \varphi_{\sigma}[a_{1},a_{2},\ldots, a_{k}]\mu(\sigma, \pi).$$
Note that
$$\varphi_{1_k}[a_1,a_2, \ldots, a_k] = \varphi_{k}(a_1,a_2,\ldots, a_k) = \varphi(a_1a_2\ldots a_k), \ \ 
\kappa_{1_k}[a_1,a_2,\ldots, a_k] = \kappa_{k}(a_1,a_2,\ldots, a_k).$$  
\noindent Moreover, it can be shown that
\begin{eqnarray}
\varphi(a_1a_2\cdots a_k) = \sum_{\pi \in  NC(k)} \kappa_{\pi}[a_1,a_2,\cdots, a_k]. \label{eqn: m1to14.1}
\end{eqnarray}

\noindent \textbf{Free independence}. 
Let $(\mathcal{A}, \varphi)$ be an NCP.  Unital $*$-sub-algebras $(\mathcal{A}_{i})_{i \in I}$ of $\mathcal{A}$  are \textit{freely independent} (strictly speaking $*$-free) if for all $k \geq 2$ and for all $a_{i} \in \mathcal{A}_{i(j)}$,  $j=1,2,\ldots,k$, $i(j)\in I$, 
we have $\kappa_{k}(a_{1},a_{2},\ldots,a_{k}) = 0$ whenever at least two $i(j), j=1, \ldots , k$ are different.
In short, if all \textit{mixed free cumulants} 
are zero. Collections of variables are said to be free if the $*$-algebra generated by the collections are free. It is easy to see that $1_{\mathcal{A}}$ is free of all other variables. 
It is known that two collections $\{a_{i}, 1\leq i \leq k\}$ and $\{b_{i},1\leq i \leq k\}$ 
are free if and only if for all choices of 
$n \geq 1$ and indices $1\leq i(t), j(t)\leq k, t=1, \ldots n$, 
\begin{eqnarray}
\varphi(a_{i(1)}b_{j(1)}a_{i(2)}b_{j(2)}\ldots a_{i(n)}b_{j(n)}) = \hspace{-7pt}\sum_{\pi \in NC(n)}\hspace{-5pt}\kappa_{\pi}[a_{i(1)},a_{i(2)},\ldots,a_{i(n)}]\varphi_{K(\pi)}[b_{j(1)},b_{j(2)},\ldots,b_{j(n)}]. \label{eqn: mp6}
\end{eqnarray}


Collections of variables $\{a_{ij}^{(n)}: 0 \leq i \leq t\}$, are said to be  \textit{asymptotically free} across $1 \leq j \leq k$,  if 
they converge jointly to $\{a_{ij}\}$, $0 \leq i \leq t, 1 \leq j \leq k$ which are free across $1 \leq j \leq k$. 




\noindent \textbf{Asymptotic freeness of Wigner and deterministic matrices}. 
A self-adjoint variable $s$ is called a \textit{standard semi-circular variable} if 
\begin{eqnarray}
\kappa_{k}(s)=0, \ \ {\rm for \ all}\ \ k\neq 2,\ \ {\rm and}\ \ \kappa_2(s)=1. \label{eqn: s6}
\end{eqnarray}
A $p\times p$ real symmetric random matrix is a \textit{Wigner matrix} 
if the entries are independent with mean 0 variance 1 entries on and above the diagonal. 
The following theorem is well known. See Theorems 5.4.2 and 5.4.5 of \cite{ZAG2010} for proof.  

\begin{theorem} \label{lem: wig}
Let $\{W_p^{(i)} = ((w_{jk,p}^{(i)}))_{1 \leq j,k \leq p}:\ 1 \leq i \leq m\}$ be 
independent $p\times p$ Wigner matrices such that $\sup_{p \geq 1} \sup_{1 \leq j,k \leq p} \mathbb{E}|w_{jk,p}^{(i)}|^r < \infty$ for all $r \geq 1$, $1 \leq i \leq m$. Let $\{D^{(i)}_p = ((d_{jk}^{(i)}))_{1 \leq j,k \leq p}:\ 1 \leq i \leq u\}$ be
$p\times p$ deterministic matrices 
which  converge jointly as elements of $(\mathcal{M}_{p}(\mathbb{C}),p^{-1}{\rm{ETr}})$ 
as $p \to \infty$, and $\sup_{p \geq 1}\max_{1\leq j,k\leq p}|d_{jk}^{(i)}| < \infty$ for all $1 \leq i \leq u$. Then the following statements hold.
\begin{enumerate}
\item As $p\to \infty$, $\{p^{-1/2}W_p^{(i)}:\ 1 \leq i \leq m\}$ converge as elements of $(\mathcal{M}_{p}(\mathbb{C}),p^{-1}{\rm{ETr}})$ to $\{s^{(i)}:\ 1 \leq i \leq m\}$  which are free standard semi-circular variables in some NCP $(\mathcal{A}, \varphi)$.


\item  $\{p^{-1/2}W_p^{(i)}:\ 1 \leq i \leq m\}$  and $\{D^{(i)}_p:\ 1 \leq i \leq u\}$ are asymptotically free.
\end{enumerate}
\end{theorem}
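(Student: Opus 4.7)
The plan is to establish both statements simultaneously by the method of moments. It suffices to show that for every $n\geq 1$ and every choice of indices $i_1,\ldots,i_n \in \{1,\ldots,m\}$ and $j_1,\ldots,j_n \in \{1,\ldots,u\}$, the normalized expected trace
$$\frac{1}{p}\mathrm{ETr}\!\left(p^{-1/2}W_p^{(i_1)}D_p^{(j_1)}\cdots p^{-1/2}W_p^{(i_n)}D_p^{(j_n)}\right)$$
converges as $p\to\infty$ to the right-hand side of (\ref{eqn: mp6}) with the $a$'s replaced by free standard semi-circulars $s^{(i)}$ and the $b$'s by the joint limit $d^{(j)}$ of the deterministic matrices. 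Since inserting identity matrices recovers arbitrary alternating monomials, this will simultaneously yield (1) (by specialising to $D_p^{(j)}=I_p$ and invoking (\ref{eqn: m1to14.1})) and (2) (via the characterisation of freeness provided by (\ref{eqn: mp6})).

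First I would expand the trace as
$$p^{-1-n/2}\sum_{k_0,\ldots,k_{2n-1}=1}^{p} \mathbb{E}\!\left[\prod_{a=1}^{n} w^{(i_a)}_{k_{2a-2},k_{2a-1},p}\right]\prod_{a=1}^{n} d^{(j_a)}_{k_{2a-1},k_{2a}}$$
with cyclic convention $k_{2n}=k_0$. By independence of Wigner entries across matrices and within each matrix (up to Wigner symmetry), together with the mean-zero condition, the expectation vanishes unless $\{1,\ldots,n\}$ is partitioned into blocks each of size $\geq 2$ by some $\pi$ on which (i) the type labels $i_a$ are constant and (ii) the entry positions $(k_{2a-2},k_{2a-1})$ coincide up to Wigner symmetry. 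The uniform moment bounds render the surviving expectations uniformly bounded.

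Next I would apply the standard genus expansion of Wigner moments: such a partition $\pi$ contributes $p^{f(\pi)-1-n/2}$ times a bounded factor, where $f(\pi)$ is the number of free indices left after the block constraints are imposed. One verifies that $f(\pi)\leq 1+n/2$, with equality if and only if every block of $\pi$ has size exactly two and $\pi$ is non-crossing; blocks of size $\geq 3$ and crossings each cost a factor of $p^{-1}$ or more. Hence only non-crossing pair partitions $\pi$ of $\{1,\ldots,n\}$ compatible with the type labels $i_1,\ldots,i_n$ survive in the limit.

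The main obstacle, and the combinatorial heart of the proof, is identifying the surviving deterministic factor with $\varphi_{K(\pi)}[d^{(j_1)},\ldots,d^{(j_n)}]$. Fixing such $\pi$, the Wigner pair constraints force the $D$-indices $(k_{2a-1},k_{2a})$ to be constant precisely along the blocks of the Kreweras complement $K(\pi)$, so that each block of $K(\pi)$ corresponds to a single free summation index. Summing over these $|K(\pi)|$ indices and invoking joint convergence of $\{D_p^{(j)}\}$ in $p^{-1}\mathrm{ETr}$ produces exactly one factor $\varphi(d^{(j_{c_1})}\cdots d^{(j_{c_s})})$ per block $\{c_1<\cdots<c_s\}$ of $K(\pi)$, whose product is $\varphi_{K(\pi)}[d^{(j_1)},\ldots,d^{(j_n)}]$. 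Since $\kappa_\pi[s^{(i_1)},\ldots,s^{(i_n)}]$ equals $1$ when $\pi$ is a non-crossing pair partition with constant type labels on each block and vanishes otherwise (by (\ref{eqn: s6}) and mutual freeness of the $s^{(i)}$), the limit matches the right-hand side of (\ref{eqn: mp6}) exactly, establishing both (1) and (2).
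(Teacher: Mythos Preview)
The paper does not supply its own proof of this theorem; it quotes it as well known and defers to Theorems~5.4.2 and~5.4.5 of \cite{ZAG2010}. Your proposal is precisely the standard moment-method/genus-expansion argument carried out in that reference, and the outline is correct: expand the trace, use independence and the mean-zero condition to reduce to partitions with blocks of size $\geq 2$ and constant colour, invoke the genus bound to isolate non-crossing pair partitions in the $p\to\infty$ limit, and then identify the surviving deterministic factor with $\varphi_{K(\pi)}$.

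Two places where your sketch is light and would need a sentence each in a full write-up: (i) within a pair block $\{a,b\}$ the Wigner symmetry permits both the ``direct'' identification $(k_{2a-2},k_{2a-1})=(k_{2b-2},k_{2b-1})$ and the ``transposed'' one $(k_{2a-2},k_{2a-1})=(k_{2b-1},k_{2b-2})$, and one must observe that only the transposed identification attains the maximal index count that produces the Kreweras structure; (ii) for the sub-leading partitions you are using the uniform entrywise bound on the $D^{(j)}_p$ to control $\prod_a d^{(j_a)}_{k_{2a-1},k_{2a}}$ by a constant, so that the whole term is $O(p^{f(\pi)-1-n/2})$---this is where the hypothesis $\sup_p\max_{j,k}|d^{(i)}_{jk}|<\infty$ is actually used, and it is worth saying so explicitly. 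Neither point is a gap, just a place where the argument deserves one more line.
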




\section{Asymptotic freeness of sample covariance matrices}

 A self-adjoint variable $g$ is said to be a \textit{Mar\v{c}enko-Pastur variable} with parameter $y \in (0,\infty)$ if 
\begin{eqnarray}
\kappa_{k}(g) = y^{k-1},\ \forall k \geq 1. \label{eqn: mpcum}
\end{eqnarray} A criterion for freeness of  Mar\v{c}enko-Pastur variables is given in Lemma \ref{lem: mpfreemoment}. 
This will be useful in the proof of asymptotic freeness of independent sample covariance matrices via embedding.  


For any subset $A$ of $\mathbb{N}$,  $NC(A)$ will denote the set of non-crossing partitions of $A$. For any set $B$, $\#B$ denotes the number of elements in $B$. 
Let $\boldsymbol{\tau} = (\tau_1,\tau_2,\ldots,\tau_k) \in \{1,2,\ldots,m\}^k$.  Define
\begin{eqnarray}
J_i(\boldsymbol{\tau}) &=& \{j \in \{1,2,\ldots,k\}:\ \tau_j=i\},\ \ T_i = \# J_i(\boldsymbol{\tau}),\ \ 1\leq i \leq m.\ \label{eqn: Ti}
\end{eqnarray}
Note that 
$\cup_{i=1}^{m}J_i (\boldsymbol{\tau}) = \{1,2,\ldots,k\}\ \ \text{and}\ \ \sum_{i=1}^{m}T_i = k$. Define
\begin{eqnarray}
 A_{k}(\boldsymbol{\tau}) &=& \{\pi \in NC(k):\ \ \pi = \cup_{i=1}^{m}\pi_i,\ \pi_i \in NC(J_i (\boldsymbol{\tau}))\}, 
  \nonumber \\
 A_{t_1,t_2,\ldots, t_m, k}(\boldsymbol{\tau}) &=& \{\pi \in  A_{k}(\boldsymbol{\tau}):  \text{$\pi_i$ has $t_i+1$ blocks} \}. \nonumber
\end{eqnarray}
Note that 
\begin{eqnarray}
\cup_{t_1=0}^{T_1-1}\cup_{t_2=0}^{T_2-1}\cdots\cup_{t_m=0}^{T_m-1}A_{t_1,t_2,\ldots, t_m, k}(\boldsymbol{\tau}) = A_{k}(\boldsymbol{\tau}) \subset NC(k). \nonumber 
\end{eqnarray}

\begin{lemma}\label{lem: mpfreemoment}
Let $\{g_j, 1 \leq j \leq m\}$ be  Mar\v{c}enko-Pastur variables with parameter $y \in (0,\infty)$ on some NCP $(\mathcal{A}, \varphi)$. Then they are free  if and only if for every $k \geq 
1$, $1 \leq \tau_u \leq m$ and  $1 \leq u \leq k$, we have 
\begin{eqnarray}
\varphi(g_{\tau_1}g_{\tau_2}\cdots g_{\tau_k}) 
&=& \sum_{t_1=0}^{T_1-1}\cdots\sum_{t_m=0}^{T_m-1}  (\# A_{t_1,t_2,\ldots, t_m,k} (\boldsymbol{\tau})) y^{k-\sum_{i=1}^{m}(t_i-1)}. 
\end{eqnarray}
\end{lemma}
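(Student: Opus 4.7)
The plan is to prove the two directions separately. The forward direction (freeness implies the moment identity) is a direct application of the moment-cumulant formula; the reverse direction will follow by a moment-matching argument.

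For the ``only if'' direction, I would start from the moment-cumulant expansion \eqref{eqn: m1to14.1}:
\begin{equation*}
\varphi(g_{\tau_1}g_{\tau_2}\cdots g_{\tau_k}) \;=\; \sum_{\pi \in NC(k)} \kappa_\pi[g_{\tau_1},\ldots,g_{\tau_k}].
\end{equation*}
Since $\{g_j\}$ are assumed free, every mixed free cumulant vanishes, so the only $\pi$ contributing to the sum are those whose each block is contained inside a single $J_i(\boldsymbol{\tau})$; these are exactly the elements of $A_k(\boldsymbol{\tau})$. For such $\pi = \cup_i \pi_i$ with $\pi_i \in NC(J_i(\boldsymbol{\tau}))$, multiplicativity of $\kappa_\pi$ together with the MP cumulant formula \eqref{eqn: mpcum} gives
\begin{equation*}
\kappa_\pi[g_{\tau_1},\ldots,g_{\tau_k}] \;=\; \prod_{V\in\pi} y^{|V|-1}.
\end{equation*}
I would then regroup the sum over $A_k(\boldsymbol{\tau})$ according to the number of blocks $(t_i+1)$ of each $\pi_i$, which exhausts $A_k(\boldsymbol{\tau})$ as the disjoint union $\cup_{t_1,\ldots,t_m}A_{t_1,\ldots,t_m,k}(\boldsymbol{\tau})$. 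Collecting the power of $y$ coming from $\sum_{V\in\pi}(|V|-1)$ (which depends only on the block counts) produces the stated right-hand side.

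For the ``if'' direction I would use a moment-matching argument rather than invert cumulants by hand. By the forward direction, \emph{any} free family of Mar\v{c}enko-Pastur variables with parameter $y$ satisfies exactly the displayed moment identity. Hence if our $\{g_j\}$ satisfy this identity for every $k$ and every $\boldsymbol{\tau}$, their joint $*$-distribution agrees with that of a free family of MP$(y)$ variables. Since freeness is determined by the joint $*$-distribution — all mixed free cumulants are polynomials in the joint moments via M\"obius inversion, so they vanish exactly as in the free case — the variables $\{g_j\}$ are themselves free.

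The main obstacle is the combinatorial bookkeeping in the forward step: organizing the triple-sum over $A_{t_1,\ldots,t_m,k}(\boldsymbol{\tau})$, recognizing $|\pi|=\sum_i(t_i+1)$ and $\sum_{V\in\pi}(|V|-1)=k-|\pi|$, and matching the resulting exponent of $y$ against the claimed one. Once the indexing by $(t_1,\ldots,t_m)$ is set up cleanly, both the reduction to $A_k(\boldsymbol{\tau})$ (forward) and the appeal to moment-determination (reverse) are essentially formal.
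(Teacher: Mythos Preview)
Your proposal is correct and follows essentially the same route as the paper. The forward direction is identical---moment-cumulant formula, vanishing of mixed cumulants reduces to $A_k(\boldsymbol{\tau})$, then the MP cumulants give the power of $y$ depending only on block counts---and your reverse direction via moment-matching is precisely what the paper means when it invokes ``the one-one correspondence between $\{\varphi_\pi\}$ and $\{\kappa_\pi\}$'': joint moments determine joint cumulants by M\"obius inversion, so agreement with a free family forces freeness.
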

\begin{proof}
Suppose $\{g_i\}$ are free. For $1 \leq \tau_u \leq m$ and  $1 \leq u \leq k$, we have 
\begin{eqnarray}
\varphi(g_{\tau_1}g_{\tau_2}\cdots g_{\tau_k}) &=& \sum_{\pi \in NC(k)} \kappa_{\pi}[g_{\tau_1},g_{\tau_2},\ldots, g_{\tau_k}],\ \ \text{by (\ref{eqn: m1to14.1}}) \nonumber \\
&=& \sum_{\pi \in A_{k}(\boldsymbol{\tau})} \prod_{i=1}^{m} \kappa_{\pi_i}[g_{i},g_{i},\ldots, g_{i}],\ \ \text{by definition of free independence.} \label{eqn: mp2} 
\end{eqnarray}
Now suppose $\pi_i = \cup_{j=1}^{t_i+1} V_{ij} \in NC(J_i(\boldsymbol{\tau}))$ has $(t_i+1)$ blocks $\{V_{ij}:\ 1 \leq j \leq t_i+1\}$ of  sizes $\{\nu_{ij}:\ 1 \leq j \leq t_i+1\}$  respectively and $\sum_{j=1}^{t_i+1}\nu_{ij} = T_i$ for all $1 \leq i \leq m$. Then  by (\ref{eqn: mpcum}), we have
\begin{eqnarray}
\kappa_{\pi_i}[g_{i},g_{i},\ldots, g_{i}]  &=& \prod_{j=1}^{t_i+1} \kappa(V_{ij}) [g_{i},g_{i},\ldots, g_{i}] =  \prod_{j=1}^{t_i+1}  y^{\nu_{ij}-1} = y^{T_i-t_i-1},\ \forall\ 1 \leq i \leq m. \label{eqn: mp3}
\end{eqnarray}
Hence (\ref{eqn: mp2}) and (\ref{eqn: mp3}) together imply
\begin{eqnarray}
\varphi(g_{\tau_1}g_{\tau_2}\cdots g_{\tau_k}) &=&\sum_{t_1=0}^{T_1-1}\cdots\sum_{t_m=0}^{T_m-1}  (\# A_{t_1,t_2,\ldots, t_m, k}(\boldsymbol{\tau})) \left( \prod_{i=1}^{m} y^{T_i-t_i-1}\right) \nonumber \\
&=&\sum_{t_1=0}^{T_1-1}\cdots\sum_{t_m=0}^{T_m-1}  (\# A_{t_1,t_2,\ldots, t_m, k} (\boldsymbol{\tau})) y^{k-\sum_{i=1}^{m}(t_i-1)}. 
\nonumber 
\end{eqnarray}
The converse follows by using the one-one correspondence between $\{\varphi_{\pi}\}$ and $\{\kappa_{\pi}\}$.
This completes the proof.
\end{proof}

Suppose 
$\{X^{(i)} = ((x_{jk}^{(i)} ))_{1 \leq j\leq p, 1 \leq k \leq n} :\ 1\leq i \leq m\}$ are independent 
$p \times n$ random matrices where  $\{x_{jk}^{(i)}:\ 1 \leq j\leq p,\ 1 \leq k \leq n,\ 1 \leq i \leq m\}$ are independent across $i,j$ and $k$ and each of them has mean 0 and variance 1. Let 
\begin{eqnarray}
S^{(i)} = \frac{1}{n}X^{(i)}X^{(i)*},\ \ 1 \leq i \leq m \nonumber 
\end{eqnarray}
be the respective \textit{sample covariance matrices}.  

\begin{theorem}[\cite{CC2004}] \label{thm: mpfree}
Suppose $p\to \infty$ and $p/n \to y \in (0,\infty)$.
Suppose that $\sup_{p \geq 1}\sup_{1 \leq j \leq p} \sup_{1 \leq k \leq n} E|x_{jk}^{(i)}|^r < \infty,\ \forall\ r \geq 1$ and $1 \leq i \leq m$. Then as $p \to \infty$, $\{S^{(i)}:\ 1\leq i \leq m\}$ as elements of $(\mathcal{M}_{p}(\mathbb{C}), p^{-1}\rm{ETr})$ converge to $(g_1,g_2,\ldots, g_m)$ which are free and each $g_i$ is a Mar\v{c}enko-Pastur variable with parameter $y$.
\end{theorem}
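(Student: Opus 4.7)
My plan is to realise each sample covariance matrix as a two-sided corner of a product of enlarged Wigner matrices, apply Theorem \ref{lem: wig}, and then verify the moment formula of Lemma \ref{lem: mpfreemoment}.

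\emph{Embedding.} Set $N = p+n$ and, for each $1\le i\le m$, form the $N\times N$ Hermitian matrix
$$Y^{(i)} := \begin{pmatrix} A^{(i)} & X^{(i)} \\ X^{(i)*} & B^{(i)} \end{pmatrix},$$
where the $A^{(i)}$ ($p\times p$) and $B^{(i)}$ ($n\times n$) are auxiliary independent Wigner blocks with entries independent of all other quantities and satisfying the moment hypothesis of Theorem \ref{lem: wig}. Let $P = \mathrm{diag}(I_p,0_n)$, $Q = I_N - P$, and $W^{(i)} := N^{-1/2}Y^{(i)}$. A direct block computation gives $PY^{(i)}QY^{(i)}P = \mathrm{diag}(X^{(i)}X^{(i)*},0)$, so writing $\widetilde S^{(i)} := \mathrm{diag}(S^{(i)},0)$ one has $\widetilde S^{(i)} = (N/n)\, PW^{(i)}QW^{(i)}P$. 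By Theorem \ref{lem: wig}, the $\{W^{(i)}\}_{i=1}^m$ converge jointly to free standard semi-circulars $\{s_i\}$, themselves jointly free from the limits $\mathbf p$, $\mathbf q = 1_{\mathcal A}-\mathbf p$ of $P,Q$, which are complementary projections with $\varphi(\mathbf p) = \lambda := y/(1+y)$.

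\emph{Moment identification.} Since $\mathrm{Tr}[S^{(\tau_1)}\cdots S^{(\tau_k)}] = \mathrm{Tr}[\widetilde S^{(\tau_1)}\cdots \widetilde S^{(\tau_k)}]$ and $P^2 = P$ merges consecutive projections, combined with $N/p\to (1+y)/y$ and $N/n\to 1+y$, I obtain
$$\lim_{p\to\infty}\frac{1}{p}\mathrm{ETr}[S^{(\tau_1)}\cdots S^{(\tau_k)}] = \frac{(1+y)^{k+1}}{y}\,\varphi\bigl(\mathbf p s_{\tau_1}\mathbf q s_{\tau_1}\mathbf p s_{\tau_2}\mathbf q s_{\tau_2}\mathbf p\cdots \mathbf p s_{\tau_k}\mathbf q s_{\tau_k}\mathbf p\bigr) =: L_{\boldsymbol\tau}.$$
I then expand $L_{\boldsymbol\tau}$ via the freeness criterion (\ref{eqn: mp6}), with the $\{s_i\}$-algebra as the $a$-side and the $\{\mathbf p,\mathbf q\}$-algebra as the $b$-side: because $\kappa_n(s_i)=0$ for $n\neq 2$ and $\kappa_2(s_\alpha,s_\beta)=\delta_{\alpha\beta}$, only non-crossing pair partitions $\pi$ of the $2k$ semi-circular slots that pair positions of equal $\tau$-value contribute, each with $\kappa_\pi=1$. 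For each such $\pi$, $\varphi_{K(\pi)}$ evaluated on the alternating string $(\mathbf q,\mathbf p,\ldots,\mathbf q,\mathbf p)$ vanishes unless every block of $K(\pi)$ is monochromatic, in which case it equals $\lambda^{b(\pi)}(1-\lambda)^{a(\pi)}$ with $a(\pi)+b(\pi)=|K(\pi)|=k+1$; using $\lambda = y/(1+y)$, the prefactor $(1+y)^{k+1}/y$ collapses the contribution of $\pi$ to the single power $y^{b(\pi)-1}$.

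\emph{Main obstacle.} The remaining and most involved step is to match the resulting sum $\sum_\pi y^{b(\pi)-1}$ with the right-hand side of Lemma \ref{lem: mpfreemoment}. I would build an explicit weight-preserving bijection between the contributing $\pi$'s and the disjoint union $\bigsqcup_{t_1,\ldots,t_m} A_{t_1,\ldots,t_m,k}(\boldsymbol\tau)$: within each $\tau$-class $J_i(\boldsymbol\tau)$, the Kreweras complement should turn the non-crossing pair partition of the $2T_i$ slots carrying index $i$ into a non-crossing partition $\pi_i \in NC(J_i(\boldsymbol\tau))$, and the $t_i+1$ blocks of $\pi_i$ should be in bijection with the monochromatic blocks of $K(\pi)$ of one fixed colour supported in that class. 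Verifying both that this bijection exists and that the block count $b(\pi)$ translates exactly into the exponent of $y$ appearing in Lemma \ref{lem: mpfreemoment} is where the real combinatorial work lies; this reflects the classical interplay between Kreweras complementation, the $\mathbf p/\mathbf q$-colouring, and the doubling correspondence between non-crossing partitions of $[k]$ and non-crossing pair partitions of $[2k]$.
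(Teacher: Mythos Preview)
Your proposal is correct and follows essentially the same route as the paper: the embedding $\widetilde S^{(i)}=(N/n)\,PW^{(i)}QW^{(i)}P$, the application of Theorem~\ref{lem: wig}, the expansion via (\ref{eqn: mp6}) and (\ref{eqn: s6}) over colour-respecting pair partitions $B_{2k}(\boldsymbol\tau)$, and the reduction to matching a weighted count against Lemma~\ref{lem: mpfreemoment} are exactly the paper's steps. Two small clarifications on the part you flag as the obstacle: (i) for every $\pi\in NC_2(2k)$ the blocks of $K(\pi)$ are automatically parity-pure, so your ``vanishes unless monochromatic'' condition is never actually violated and need not be imposed; (ii) the Kreweras complement of $\pi_i\in NC_2(\tilde J_i(\boldsymbol\tau))$ lands in $NC(2T_i)$, not $NC(T_i)$, so the map you want is not literally $K$ but the standard halving bijection $NC_2(2T_i)\to NC(T_i)$ --- the paper makes this explicit as Lemma~\ref{lem: bijection} and pairs it with the block-count identity~\eqref{eqn: 5}, which identifies your exponent $b(\pi)$ with $k+1-\sum_i(t_i+1)$.
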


To prove Theorem \ref{thm: mpfree} by using Theorem \ref{lem: wig}, we need one more lemma. 
A partition is said to be a \textit{pair-partition} if every block is of size $2$.
Let $NC_2(2k)$ be the set of pair-partitions in $NC(2k)$.
It is well known that $NC_2(2k)$ and $NC(k)$ have the same cardinality. 
In the following lemma we set up a specific bijection. 

For integers $1\leq j_1 < j_2 < \cdots j_r\leq k$, let $$A = \{2j_1-1,2j_1, 2j_2-1,2j_2,\ldots,2j_r-1, 2j_r\}.$$
For $\pi \in NC_2(A)$, let
\begin{eqnarray}
\mathcal{S}(\pi) = \text{Number of blocks of} \ \ \pi\ \ \text{whose first element is odd}. \nonumber
\end{eqnarray}
For $\boldsymbol{\tau} = (\tau_1,\tau_2,\ldots,\tau_k) \in \{1,2,\ldots,m\}^k$,  let 
\begin{eqnarray}
  \tilde{J}_i (\boldsymbol{\tau}) &=& \cup_{j \in J_i (\boldsymbol{\tau})}\{2j-1, 2j\}, \nonumber \\
 B_{2k}(\boldsymbol{\tau}) &=& \{\pi \in NC_{2}(2k): \pi = \cup_{i=1}^{m}\pi_i,\ \pi_i \in NC_2(\tilde{J}_i(\boldsymbol{\tau})),  1 \leq i \leq m\}, \nonumber \\
 B_{t_1, \ldots, t_m, k}(\boldsymbol{\tau}) &=& \{\pi \in B_{2k}(\boldsymbol{\tau}):  \mathcal{S}(\pi_i) = t_i+1, 1 \leq i \leq m\}, \nonumber 
\end{eqnarray}
Recall $T_i$ from (\ref{eqn: Ti}) which ranges in $\{1,2,\ldots,k\}$.  Note that 
\begin{eqnarray}
\cup_{t_1=0}^{T_1-1}\cup_{t_2=0}^{T_2-1}\cdots\cup_{t_m=0}^{T_m-1} B_{t_1, \ldots, t_m, k}(\boldsymbol{\tau}) =  B_{2k}(\boldsymbol{\tau}) \subset NC_2(2k). \label{eqn: mp7}
\end{eqnarray}

\begin{lemma} \label{lem: bijection}
For all $\boldsymbol{\tau} = (\tau_1,\tau_2,\ldots,\tau_k) \in \{1,2,\ldots,m\}^k$, $\  0 \leq t_i \leq T_i-1$, $1 \leq i \leq m$ and $k,m \geq 1$, we have
$\#B_{t_1, t_2,\ldots, t_m, k}(\boldsymbol{\tau}) =\# A_{t_1, t_2,\ldots, t_m, k}(\boldsymbol{\tau})$.
\end{lemma}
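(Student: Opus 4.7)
The plan is to construct an explicit bijection $\Phi: A_k(\boldsymbol{\tau}) \to B_{2k}(\boldsymbol{\tau})$ from the classical ``doubling'' map $NC(k) \to NC_2(2k)$, and check that it restricts, for every parameter vector $(t_1, \ldots, t_m)$, to a bijection between $A_{t_1,\ldots,t_m,k}(\boldsymbol{\tau})$ and $B_{t_1,\ldots,t_m,k}(\boldsymbol{\tau})$. The equality of cardinalities in the statement then follows immediately.

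Given $\pi \in A_k(\boldsymbol{\tau})$, for each block $V = \{i_1 < i_2 < \cdots < i_s\}$ of $\pi$ I introduce the $s$ pairs
\[
\{2i_1 - 1,\, 2i_s\}, \ \{2i_1,\, 2i_2 - 1\}, \ \{2i_2,\, 2i_3 - 1\}, \ \ldots, \ \{2i_{s-1},\, 2i_s - 1\},
\]
and define $\Phi(\pi)$ as the union of these pairs over all blocks of $\pi$. Three properties require verification. First, $\Phi(\pi) \in NC_2(2k)$: this is the classical statement that doubling maps $NC(k)$ bijectively onto $NC_2(2k)$, provable by induction on $k$ by peeling off an interval block of $\pi$. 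Second, since each block $V$ of $\pi$ is monochromatic of some color $i$, every pair produced by $V$ lies inside $\tilde{J}_i(\boldsymbol{\tau})$, so $\Phi(\pi) \in B_{2k}(\boldsymbol{\tau})$. Third, for each block $V$ of size $s$, exactly one of the associated $s$ pairs has an odd first element, namely $\{2i_1 - 1,\, 2i_s\}$: the remaining pairs $\{2i_a,\, 2i_{a+1} - 1\}$ satisfy $2i_a < 2i_{a+1} - 1$ because $i_a < i_{a+1}$, making their smaller element $2i_a$ even. Consequently $\mathcal{S}(\Phi(\pi)_i)$ equals the number of blocks of $\pi_i$, so $\Phi$ sends $A_{t_1,\ldots,t_m,k}(\boldsymbol{\tau})$ into $B_{t_1,\ldots,t_m,k}(\boldsymbol{\tau})$.

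The inverse $\Psi: B_{2k}(\boldsymbol{\tau}) \to A_k(\boldsymbol{\tau})$ sends $\tilde{\pi}$ to the transitive closure of the relation on $\{1, \ldots, k\}$ in which $i \sim j$ whenever some pair of $\tilde{\pi}$ has one endpoint in $\{2i-1, 2i\}$ and the other in $\{2j-1, 2j\}$. The main obstacle is showing that $\Psi(\tilde{\pi})$ is genuinely non-crossing on $\{1,\ldots,k\}$, and that $\Psi \circ \Phi$ and $\Phi \circ \Psi$ are both the identity; these are precisely the two directions of the classical $NC(k) \leftrightarrow NC_2(2k)$ duality. The cleanest route is an induction on $k$ exploiting the fact that a non-crossing pair partition of $\{1,\ldots,2k\}$ must contain an adjacent pair $\{\ell,\ell+1\}$: the cases $\ell = 2j-1$ (which isolates $j$ as a singleton of $\pi$) and $\ell = 2j$ (which identifies $j$ and $j+1$ as members of a common block of $\pi$) both reduce to a smaller instance of the same bijection, and a routine check confirms that the coloring constraint and the odd-first-element count are preserved throughout.
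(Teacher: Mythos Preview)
Your proof is correct and uses essentially the same bijection as the paper: the classical doubling map between $NC(k)$ and $NC_2(2k)$, checked to respect both the colouring by $\boldsymbol{\tau}$ and the block/odd-first-element count. The only cosmetic difference is direction---you describe the forward map $\Phi:NC(k)\to NC_2(2k)$ explicitly and its inverse via transitive closure, whereas the paper gives the algorithm for $f:NC_2(2k)\to NC(k)$ (starting from each odd first element and chaining pairs) and leaves the inverse implicit; a quick check shows your $\Phi$ and the paper's $f$ are mutual inverses.
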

\begin{proof}
It is well known that $NC_2(2k)$ and $NC(k)$ have the same cardinality. We refine this result by setting up a bijection $f: NC_2(2k) \to NC(k)$ that also serves as a bijection between their subsets, 
$B_{t_1, t_2,\ldots, t_m, k}$ and $A_{t_1, t_2,\ldots, t_m, k}$.

For $\pi\in NC_2(2k)$, collect all the \textit{odd first elements} of blocks of $\pi$ in the following set: 
$$J_{\pi}=\{2i-1: 1\leq i \leq k, (2i-1, 2j)\in  \pi\}.$$
We now give an algorithm to construct the blocks of $f(\pi)$. 

Note that $1\in J_{\pi}$ always. Let $B_1$ be the block of $f(\pi)$, to be constructed, containing $1$. We build up $B_1$ by adding elements sequentially as follows.  Suppose $(1, 2i_1)\in \pi$. Then $i_1$ is added to $B_1$. If $i_1=1$  we stop and $B_1=\{1\}$. 
If $i_1 > 1$, then $(2i_2, 2i_1-1)\in \pi$ for some $i_2$. Then add $i_2$ to $B_1$. Now consider $(2i_3, 2i_2-1) \in \pi$ and add $i_3$ to $B_1$. We continue this till we reach $2i_k=2$ for some $k$. Then $B_1=\{1=i_k < i_{k-1} \cdots <  i_1\}$ is the block containing $1$. 

Now we move to the next available  element of $J_{\pi}$, say  $2j-1$. Suppose it matches with $2j_1$. Then the construction of the next block, say  $B_2$ begins by putting $j_1$ in it. Then we repeat the method of construction given above till we obtain $2j_1,\ldots ,  2j_t=2j$ for some $t$.  Then 
$B_2=\{j_t=j < j_{t-1} < \cdots <  j_1\}$. Note that $B_2$ is disjoint from $B_1$. We continue this process till all odd first elements are exhausted. 


The above construction leads us to the following conclusion:
\vskip3pt

\noindent 
(i) For all $\pi \in NC_2(2k)$, we have a unique $f(\pi) \in NC(k)$. 
\vskip3pt


\noindent 
(ii) For all $\sigma \in NC(k)$, we have a unique $\pi \in NC_{2}(2k)$ such that $f(\pi) = \sigma$.

\vskip 3pt

\noindent
(iii) Following statements hold for each $1 \leq j \leq m$.
\vskip 3pt
\noindent (a) For each $\pi \in NC_2(\tilde{J}_j(\boldsymbol{\tau})) \cong NC_{2}(2T_j)$, we have a unique $f(\pi_j) \in NC({J}_j(\boldsymbol{\tau})) \cong NC(T_j)$.
\vskip 3pt
\noindent (b)  For each $\sigma \in NC({J}_j(\boldsymbol{\tau})) \cong NC(T_j)$, we have a unique $\pi  \in NC_2(\tilde{J}_j(\boldsymbol{\tau})) \cong NC_2(2T_j)$ such that $f(\pi) = \sigma$.
\vskip 3pt
\noindent (iv)  For all $\pi = \cup_{j=1}^{m}\pi_j \in B_{t_1,t_2,\ldots,t_m,k}(\boldsymbol{\tau}) \subset NC_{2}(2k)$ such that 
$\pi_j \in NC_2(\tilde{J}_j(\boldsymbol{\tau}))\ \forall\ 1 \leq j \leq m$, we have  $f(\pi) = \cup_{j=1}^{m}f(\pi_j)$ where  
$f(\pi_j) \in NC({J}_j(\boldsymbol{\tau}))\ \forall\ 1 \leq j \leq m$. Hence, 
 $f(\pi) \in A_{t_1,t_2,\ldots,t_m,k}(\boldsymbol{\tau})$.

\noindent 
(v) For each $\sigma = \cup_{j=1}^{m} \sigma_j \in A_{t_1,t_2,\ldots,t_m,k}(\boldsymbol{\tau}) \subset NC(k)$ with $\sigma_j \in 
NC({J}_j(\boldsymbol{\tau}))\ \forall\ 1 \leq j \leq m$, we have a unique $\pi = \cup_{j=1}^{m}\pi_j \in B_{t_1,t_2,\ldots,t_m,k}(\boldsymbol{\tau}) \subset NC_{2}(2k)$ such that $f(\pi_j) = \sigma_j\ \forall\ 1 \leq j \leq m$ and $f(\pi) =\sigma$.

\vskip3pt

The above  observations establish Lemma \ref{lem: bijection}. 
\end{proof}

\begin{proof}[Proof of Theorem \ref{thm: mpfree}] 
By Lemma \ref{lem: mpfreemoment}, it is enough to prove that
\begin{eqnarray}
\lim \frac{1}{p} E\text{Tr}(S^{(\tau_1)}\cdots S^{(\tau_k)}) =  \sum_{t_1=0}^{T_1-1}\sum_{t_2=0}^{T_2-1}\cdots\sum_{t_m=0}^{T_m-1}  (\# A_{t_1,t_2,\ldots, t_m,k} (\boldsymbol{\tau})) y^{k-\sum_{i=1}^{m}(t_i-1)}. \nonumber
\end{eqnarray}
We establish this by embedding the $S$-matrices into larger Wigner matrices and invoking Lemma \ref{lem: wig}. 
For $1\leq i \leq m$, let $\tilde{W}_{2i-1}$ and $\tilde{W}_{2i}$ be independent Wigner matrices of order $p$ and $n$ respectively, which are also independent of $\{X_i\}$, and whose elements satisfy the assumptions in Lemma \ref{lem: wig}.  Using these, we construct the following Wigner matrices of order $(p+n)$:
\begin{eqnarray}
W_j = \bigg[\begin{array}{cc}
\tilde{W}_{2j-1} & X_j \\ 
X_j^{*} & \tilde{W}_{2j}
\end{array} \bigg],\  1 \leq j \leq m. \nonumber
\end{eqnarray}
Let $I_p$ and $I_n$ be the identity matrices of order $p$ and $n$ respectively, and $0_{p \times n}$ be the $p \times n$ matrix whose all entries are zero.  Define 
\begin{eqnarray}
\bar{I} = \bigg[\begin{array}{cc}
I_{p} & 0_{p \times n} \\ 
0_{n \times p} & 0_{n \times n}
\end{array} \bigg], \ \ \ \underline{I} = \bigg[\begin{array}{cc}
0_{p \times p} & 0_{p \times n} \\ 
0_{n \times p} & I_{n}
\end{array} \bigg]. \nonumber
\end{eqnarray}
It is easy to check that  
\begin{eqnarray}
\bigg[\begin{array}{cc}
S^{(i)} & 0_{p \times n} \\ 
0_{n \times p} & 0_{n \times n}
\end{array} \bigg] = \frac{n+p}{n}\bar{I}\frac{W_i}{\sqrt{n+p}} \underline{I} \frac{W_i}{\sqrt{n+p}} \bar{I}, \ 1 \leq i \leq m. \label{eqn: mp5}
\end{eqnarray}
Thus the $S$-matrices have been embedded in Wigner matrices of higher dimension.  

Note that the right side involves independent Wigner matrices and deterministic matrices. These deterministic matrices converge jointly and satisfy the conditions of Lemma \ref{lem: wig}. It follows that 
$(\bar{I}, \underline{I}, , (n+p)^{-1/2}W_i, 1\leq i \leq m)$ as elements of $(\mathcal{M}_{n+p}(\mathbb{C}), (n+p)^{-1}\text{ETr})$ converge in distribution to $(a_0, a_1, s_i, 1\leq i \leq m)$ where $\{s_i, 1\leq i \leq m\}$  are free semi-circular and are free of $\{a_0, a_1\}$. It is also easy to see that $a_0$ and $a_1$ are self-adjoint with 
\begin{eqnarray}
\varphi(a_0^r) = \frac{y}{1+y}\ \ \text{and}\ \ \varphi(a_1^r) = \frac{1}{1+y}\ \ \forall\ r \geq 1. \label{eqn: bermom}
\end{eqnarray}

Hence, 
\begin{eqnarray}
&& \lim \frac{1}{p} \text{ETr}(S^{(\tau_1)}\cdots S^{(\tau_k)}) 
=\lim \frac{n+p}{p} \frac{1}{n+p} \text{ETr}\Big(\prod_{j=1}^{k}\frac{n+p}{n}\bar{I}\frac{W_{(\tau_j)}}{\sqrt{n+p}} \underline{I} \frac{W_{(\tau_j)}}{\sqrt{n+p}} \bar{I}\Big),\ \ \text{by (\ref{eqn: mp5})} \nonumber \\
&=& (1+y)^{k+1} y^{-1} \lim \frac{1}{n+p}\text{ETr}\Big(\prod_{j=1}^{k}\bar{I}\frac{W_{(\tau_j)}}{\sqrt{n+p}} \underline{I} \frac{W_{(\tau_j)}}{\sqrt{n+p}} \bar{I}\Big),\ \ \text{as $n^{-1}p \to y \in (0,\infty)$} \nonumber\\
 &=& (1+y)^{k+1} y^{-1}\varphi\Big(\prod_{j=1}^{k}  a_{0} s_{\tau_j} a_{1} s_{\tau_j}a_{0}\Big),\ \ \text{by Theorem \ref{lem: wig}} \label{eqn: 3} \nonumber \\
 &=&(1+y)^{k+1} y^{-1}\sum_{\pi \in B_{2k}(\boldsymbol{\tau})} \varphi_{K(\pi)}[a_0,a_1,a_0,a_1,\ldots, a_1],\ \ \text{by (\ref{eqn: mp6}) and (\ref{eqn: s6})} \nonumber \\
 &=&(1+y)^{k+1} y^{-1} \sum_{t_1=0}^{T_1-1}\sum_{t_2=0}^{T_2-1}\cdots \sum_{t_m=0}^{T_m-1} \sum_{\pi \in B_{t_1,t_2,\ldots, t_m,k}} \varphi_{K(\pi)}[a_0,a_1,a_0,a_1,\ldots, a_1],\ \ \text{by (\ref{eqn: mp7})}
 \label{eqn: mpbr1}
 \end{eqnarray}
 Recall that $B_{t_1,t_2,\ldots, t_m,k} \subset NC_2(2k)$. It is easy to see that, for $\pi \in  NC_{2}(2k)$, any block of $K(\pi)$ contains either all even elements or all odd elements. Therefore, it follows from the definition of Kreweras complement that, for all $\pi \in B_{t_1, t_2,\ldots, t_m, k}(\boldsymbol{\tau})$, 
\begin{eqnarray}
\#\text{(Blocks in $K(\pi)$ containing only even elements)} &=& \sum_{i=1}^{m} (t_i+1), \label{eqn: 5} \\
\#\text{(Blocks in $K(\pi)$ containing only odd elements)} &=& (k+1)-\sum_{i=1}^{m} (t_i+1). \nonumber
\end{eqnarray}
Hence continuing (\ref{eqn: mpbr1}), we have
\begin{eqnarray}
&& \lim \frac{1}{p} \text{ETr}(S^{(\tau_1)}\cdots S^{(\tau_k)})  \nonumber \\
&=& (1+y)^{k+1} y^{-1}\sum_{t_1=0}^{T_1-1} \cdots \sum_{t_m=0}^{T_m-1} \ \sum_{\pi \in B_{t_1,t_2,\ldots, t_m,k}} \hspace{-10pt}\big(\frac{y}{1+y}\big)^{k+1-\sum(t_i+1)} \hspace{-2pt}\big(\frac{1}{1+y}\big)^{\sum(t_i+1)},  \text{by  (\ref{eqn: bermom}) and (\ref{eqn: 5})} 
 \nonumber \\
&=&  y^{-1} \sum_{t_1=0}^{T_1-1} \cdots \sum_{t_m=0}^{T_m-1} \ \sum_{\pi \in B_{t_1,t_2,\ldots, t_m,k}} y^{k+1-
\sum_{i=1}^{m}(t_i+1)} 
= y^{-1} \sum_{t_1=0}^{T_1-1} \cdots \sum_{t_m=0}^{T_m-1}  y^{k+1-\sum_{i=1}^{m}(t_i+1)} \#B_{t_1,t_2,\ldots, t_m,k}\nonumber \\
&=&  y^{-1} \sum_{t_1=0}^{T_1-1} \cdots \sum_{t_m=0}^{T_m-1}  y^{k+1-\sum_{i=1}^{m}(t_i+1)} \#A_{t_1,t_2,\ldots, t_m,k}, \ \text{by Lemma \ref{lem: bijection}}.\nonumber 
\end{eqnarray}
This completes the proof of Theorem \ref{thm: mpfree}.
\end{proof}

\end{document}